\documentclass[reqno,11pt,final]{amsart}

%
%
%
%
\title{A note about charts built by Eriksson-Bique and Soultanis on metric measure spaces}

\author{Luca Gennaioli, Nicola Gigli}
\address{Via Bonomea 256, SISSA}
\email{luca.gennaioli@sissa.it, nicola.gigli@sissa.it}


\usepackage[utf8]{inputenc}
\usepackage[english]{babel}
\usepackage{csquotes} 
\usepackage[a4paper,twoside]{geometry}
\usepackage{graphicx}
\usepackage[backend=biber,style=alphabetic,bibencoding=latin1,language=auto]{biblatex}

\usepackage{caption} 
\usepackage[hyperfootnotes=false]{hyperref} 
\hypersetup{
	colorlinks = true,
	linkcolor = {blue},
	urlcolor = {red},
	citecolor = {blue}
}
\usepackage{amsmath}
\usepackage{amsthm}
\usepackage{amssymb}
\usepackage{esint} 
\usepackage{mathrsfs} 
\usepackage{faktor} 
\usepackage{dsfont} 
\usepackage{xcolor}
\usepackage{array}
\usepackage{hhline}
\usepackage{enumitem} 
\usepackage{comment} 
\usepackage[toc,page]{appendix} 
\usepackage{imakeidx} 
\usepackage{xparse} 
\usepackage{mathtools}
\usepackage{fancyhdr} 
\usepackage{ifthen} 
\usepackage{forloop} 
\usepackage{xstring}
\usepackage{tikz}
\usetikzlibrary{babel} 
\usetikzlibrary{cd} 
\usepackage{emptypage} 

\hfuzz=2pt 

\voffset=-1cm \textheight=23cm
\textwidth=16cm \evensidemargin=0.3cm \oddsidemargin=-0.3cm

 \oddsidemargin=1cm \evensidemargin=-.1cm

\usepackage[nameinlink,capitalise]{cleveref} 
\crefname{equation}{}{} 

\newcounter{results}[section] 

\theoremstyle{plain}
\newtheorem{theorem}[results]{Theorem}
\newtheorem{lemma}[results]{Lemma}
\newtheorem{proposition}[results]{Proposition}

\newtheorem*{theorem*}{Theorem}
\newtheorem*{lemma*}{Lemma}
\newtheorem*{proposition*}{Proposition}
\newtheorem*{corollary*}{Corollary}
\newtheorem*{exercise*}{Exercise}
\newtheorem*{fact*}{Fact}
\newtheorem*{problem*}{Problem}

\theoremstyle{remark}
\newtheorem{remark}[results]{Remark}

\newtheorem*{remark*}{Remark}
\newtheorem*{question*}{Question}

\theoremstyle{definition}
\newtheorem{definition}[results]{Definition}

\newtheorem*{definition*}{Definition}
\newtheorem*{example*}{Example}

\numberwithin{equation}{section}


\newcommand{\N}{\ensuremath{\mathbb N}} 
\newcommand{\R}{\ensuremath{\mathbb R}} 


\DeclarePairedDelimiter\norm{\lVert}{\rVert} 
\newcommand{\Leb}{\ensuremath{\mathscr L}} 
\newcommand{\essinf}{\text{ess}\inf} 


\newcommand{\de}{\ensuremath{\,\mathrm d}}







\ifdefined\comma 
	\renewcommand{\comma}{\ensuremath{\, \text{, }}}
\else 
	\newcommand{\comma}{\ensuremath{\, \text{, }}}
\fi

\newcommand{\Borel}{\ensuremath{\mathscr{B}}} 
\newcommand{\st}{\ensuremath{\ :\ }} 
\newcommand{\X}{\mathsf{X}} 
\newcommand{\Y}{\mathsf{Y}} 
\newcommand{\dist}{\text{d}} 
\newcommand{\Prphi}{{\rm Pr}}  
\newcommand{\Lebspace}{{\rm L}}  
\newcommand{\T}{{\rm T}}  


\newcommand{\m}{\ensuremath{\mathfrak m}}

\makeatletter
\newcommand{\newreptheorem}[2]{%
  \newtheorem*{rep@#1}{\rep@title}%
  \newenvironment{rep#1}[1]%
    {\def\rep@title{#2 \ref*{##1}}\begin{rep@#1}}%
    {\end{rep@#1}}%
}%
\makeatother
\theoremstyle{plain}
\newreptheorem{theorem}{Theorem}

\addbibresource{Biblio.bib}

\begin{document}

\begin{abstract}
This note is motivated by recent studies by Eriksson-Bique and Soultanis about the construction of charts in general metric measure spaces. We analyze their construction and provide an alternative and simpler proof of the fact that these charts exist on sets of finite Hausdorff dimension. The observation made here offers also some simplification about the study of the relation between the reference measure and the charts in the setting of $\text{RCD}$ spaces.
\end{abstract}

\maketitle
\tableofcontents
\thispagestyle{empty}

\section{Introduction}
In the recent, very interesting, paper \cite{EBS21} the authors provided a general construction of charts on metric measure spaces, key features of their notion being: the compatibility with Sobolev calculus (and thus in particular with the differential calculus as developed by Cheeger in \cite{Cheeger00} and the second author \cite{Gigli12}), a very general existence result, notable consequences in terms of the structure of the Sobolev spaces (see also \cite{EBRS22} and \cite{EBRS222}). An example in this latter direction is the proof that the space $W^{1,p}(\X)$, $p\in(1,\infty)$, is reflexive as soon as the space $\X$ can be covered by a countable number of sets with finite Hausdorff measure (the `previous best’ result appeared in \cite{ACM14} and required the metric to be locally doubling).

A crucial step in \cite{EBS21} is the proof that if $\varphi:E\subset\X\to\R^n$ is a `$p$-independent weak chart’, then $n$ is bounded from above by the Hausdorff dimension of $E$. For the precise meaning of `$p$-independent weak chart’ we refer to $\ref{Ebs chart}$; for the purpose of this introduction we shall limit ourselves to point out that in the smooth setting this would be equivalent to requiring the image of the differential of $\varphi$ at every point to span the whole tangent space of $\R^d$. Starting from this result, existence of actual charts is obtained via a suitable maximality argument.

Interestingly, this upper bound is proved via means that have, in principle, little to do with analysis in non-smooth setting: key ingredients are indeed the elliptic regularity result in \cite{DPR} and the study of the structure of the set of non-differentiability points of Lipschitz functions in \cite{AM16}.

This sort of procedure has a recent analogue in the theory of $\text{RCD}$ spaces. Let us recall indeed that in \cite{Mondino-Naber14} it has been proved that finite dimensional $\text{RCD}$ spaces admit bi-Lipschitz charts covering almost all the space. In \cite{Mondino-Naber14} no information about the behaviour of the reference measure w.r.t. these charts has been provided: this topic has been later studied in \cite{MK16}, \cite{DPMR16}, \cite{GP162} where, relying in a way or another on \cite{DPR} and \cite{AM16}, it has been proved that $\varphi_*(\m_{|E})\ll\mathcal \Leb^n$ for a Mondino-Naber chart $\varphi:E\to\R^n$.

Of particular interest for the kind of discussion we want to make here is the fact that in \cite{GP162} only the results in \cite{DPR} have been used, while in \cite{MK16} also those in \cite{AM16} were necessary. Comparing this with the results in \cite{EBS21} it is natural to wonder whether the use of \cite{AM16} is really crucial or can be avoided: this is the question motivating the present note. Of course, there is nothing wrong in using a well-established result in doing research, our study is simply motivated by the desire of better understanding the interesting construction done in \cite{EBS21}. The result of our investigation is that \cite{AM16} is not really needed and the line of thought presented here simplifies not only some of the steps done in \cite{EBS21}, but also some of those in \cite{GP162}: see Section $\ref{Main result section}$.

Another remark that we make, consequence of the studies in \cite{EBS21}, is that the dimension of the (co)tangent module (in the sense of \cite{Gigli12}) on a subset $E\subset\X$ is bounded from above from the Hausdorff dimension of $E$, see Remark $\ref{Bdd dimension tangent}$.

\emph{Acknowledgments}: We wish to thank Elefterios Soultanis for the numerous conversation we had with him while working on this manuscript.

\section{Preliminaries}

In this section we shall recall the definition of Sobolev space following the approach in \cite{AmbrosioGigliSavare11}. 
We say that a triple $(\X,\dist,\m)$ is a metric measure space if $(\X,\dist)$ is a complete and separable metric measure space and $\m$ is a Radon measure which is finite on balls. For the rest of the paper $p,q$ will be conjugate exponents, namely $\frac{1}{p}+\frac{1}{q}=1$.
\begin{definition}
We say that a probability measure $\pi$ on $C([0,1];\X)$ is a $\emph{q-test plan}$ if it is concentrated on $AC([0,1];\X)$ and the following two conditions are met:
\begin{enumerate}
    \item $\exists$ $C=C(\pi)>0$ such that $e_{t\sharp}\pi\leq C\m$, where $\m$ is the reference measure on $\X$ and $e_t:C([0,1];\X)\to\X$ is the evaluation map $e_t(\gamma)=\gamma_t$. 
    \item The following quantity, called $\emph{kinetic energy}$, is finite
    \begin{equation*}
        \text{K.E.}(\pi)=\int\int_0^1|\dot{\gamma}_t|^q\de t\de\pi(\gamma),
    \end{equation*}
with $|\dot{\gamma}_t|=\lim_{h\to 0}\frac{\de(\gamma_{t+h},\gamma_t)}{h}$ is the metric derivative of the curve $\gamma$.
\end{enumerate}
\end{definition}
With this notion at hand we can introduce the Sobolev space $\text{W}^{1,p}(\X,\de,\m)$:
\begin{definition}
We say that a function $f:\X\to\R$ belongs to the Sobolev space $W^{1,p}(\X,\de,\m)$ if $f\in \Lebspace^p(\m)$ and if
\begin{equation}
\label{Def of Sobolev}
    \int|f(\gamma_1)-f(\gamma_0)|\de\pi(\gamma)\leq\int\int_0^1G(\gamma_t)|\dot{\gamma}_t|\de t\de\pi(\gamma)\quad\forall\pi\;\text{q-test\;plan},
\end{equation}
with $G:\X\to\R_+$ being a Borel function belonging to $\Lebspace^p(\m)$.
\end{definition}
\begin{remark}
It is easy to see that the set of functions $G$ satisfying $\ref{Def of Sobolev}$ is a closed convex set, hence it admits an element of minimal norm: we will call such an element $\emph{p-weak upper gradient}$ and we will denote it by $|Df|_p$. With a little bit of work it is possible to prove that the function $|Df|_p$ is such that $|Df|_p\leq G$ $\m$-a.e. for every other $G$ satisfying $\ref{Def of Sobolev}$.
\end{remark}

We now switch our attention to the theory of $\Lebspace^p(\m)$-normed $\rm{L}^\infty(\m)$-modules that the second author built in \cite{Gigli14}: the following material can be found there, unless otherwise stated.

\begin{definition}[$\Lebspace^p(\m)$-normed module]
We say that a Banach space $(\mathcal{M},\norm{\cdot}_\mathcal{M})$ is an $\Lebspace^p(\m)$-normed $\rm{L}^\infty(\m)$-module if there exists a bilinear continuous map $\cdot:\rm{L}^\infty(\m)\times\mathcal{M}\to\mathcal{M}$ which makes $\mathcal{M}$ a module with unity over the ring of $\rm{L}^\infty(\m)$ functions  and another map $|\cdot|:\mathcal{M}\longrightarrow \Lebspace^p(\m)$ with nonnegative values such that 
\begin{align}
    \label{Pointwise norm}
    \norm{|v|}_{\Lebspace^p(\m)}&=\norm{v}_{\mathcal{M}}, \\
    |f\cdot v|&=|f||v|\qquad\m-\text{a.e.}
\end{align}
for all $v\in\mathcal{M}$, $f\in \rm{L}^\infty(\m)$. We call $\cdot$ the multiplication and $|\cdot|$ the $\emph{pointwise norm}$.
\end{definition}
\begin{remark}
Note that the pointwise norm is continuos thanks to the triangular inequality, in fact
\begin{equation*}
    \norm{|v|-|w|}_{\Lebspace^p(\m)}\leq\norm{|v-w|}_{\Lebspace^p(\m)}=\norm{v-w}_\mathcal{M}.
\end{equation*}
Moreover with a little bit of abuse of notation we will write $fv$ instead of $f\cdot v$ and write $\Lebspace^p(\m)$-normed module instead of $\Lebspace^p(\m)$-normed $\rm{L}^\infty(\m)$-module.
\end{remark}
A related interesting concept is the one of $\emph{localization}$ of a module, indeed it is easy to see that the following object
\begin{equation*}
    \mathcal{M}_{|E}:=\{\chi_E v:\;v\in\mathcal{M}\}
\end{equation*}
is a submodule of $\mathcal{M}$ and it clearly inherits the normed structure from $\mathcal{M}$.
\begin{definition}[$\textbf{Local independence}$]
Let $\mathcal{M}$ be an $\Lebspace^p(\m)$-normed $\rm{L}^\infty(\m)$-module and $A\in\Borel(X)$ with $\m(A)>0$, we say that a family $v_1,...,v_n\in\mathcal{M}$ is independent on $A$ if for every $f_1,...,f_n\in \rm{L}^\infty(\m)$
\begin{equation}
\label{Local independence}
    \sum_{i=1}^n f_i v_i = 0\quad\m-\text{a.e.}\;\text{on}\;A\implies f_i=0\quad\m-\text{a.e.}\;\text{on}\;A\quad\forall i=1,...,n.
\end{equation}
\end{definition}
In the spirit of linear algebra we shall also define what is the $\emph{span}$ of a set of vectors
\begin{definition}[$\textbf{Span}$]
Let $\mathcal{M}$ be an $\Lebspace^p(\m)$-normed $\rm{L}^\infty(\m)$-module, $V\subset\mathcal{M}$ a subset and $A\in\Borel(X)$. We denote with $\text{Span}_A(V)$ the closure in $\mathcal{M}$ of the $\rm{L}^\infty(\m)$-linear combinations of elements of $V$.
Moreover we say that $\text{Span}_A(V)$ is the space generated by $V$ on $A$.
\end{definition}

After this natural definition, the one of basis and of dimension for an $\Lebspace^p(\m)$-normed $\rm{L}^\infty(\m)$ arise naturally:
\begin{definition}
We say that a finite family $v_1,...,v_n\in\mathcal{M}$ is a basis on $A\in\Borel(X)$ if it is independent on $A$ and $\text{Span}_A\{v_1,...,v_n\}=\mathcal{M}_{|A}$. If the above happens we say that the $\emph{local dimension}$ of $\mathcal{M}$ on $A$ is $n$ and in case $\mathcal{M}$ has not dimension $k$ for any $k\in\N$ we say that it has infinite dimension.
\end{definition}
It can be proved that the notion of dimension is well-posed, namely if we have $v_1,...,v_n$ generating $\mathcal{M}$ on a set $A$ and $w_1,...,w_m$ are independent on $A$, then $n\geq m$. Ultimately this means that two different basis must have the same cardinality.

Building over these tools we have the following proposition:
\begin{proposition}
Let $\mathcal{M}$ be an $\Lebspace^p(\m)$-normed $\rm{L}^\infty(\m)$-module. Then there is a unique partition $\{E_i\}_{i\in\N\cup\{\infty\}}$ of $\X$, up to $\m$-a.e. equality, such that:
\begin{enumerate}
    \item for every $i\in\N$ such that $\m(E_i)>0$, $\mathcal{M}$ has dimension $i$ on $E_i$, \\
    \item for every $E\subset E_\infty$ with $\m(E)>0$, $\mathcal{M}$ has infinite dimension on $E$.
\end{enumerate}
\end{proposition}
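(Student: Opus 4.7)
The plan is to build the sets $E_i$ for $i\in\N$ by an exhaustion argument and then set $E_\infty$ to be the complement. Since $\m$ is $\sigma$-finite on the separable metric space $\X$, I would first replace it by a finite measure $\tilde{\m}$ with the same null sets, so that suprema of $\tilde{\m}(A)$ over Borel families are finite and maximizing sequences can be extracted.

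The main technical step is a glueing lemma: if $\mathcal{M}$ has dimension $i$ on disjoint Borel sets $A$ and $B$ of positive measure, then it has dimension $i$ on $A\cup B$. Given bases $\{v_1,\ldots,v_i\}$ on $A$ and $\{w_1,\ldots,w_i\}$ on $B$, the natural candidate on $A\cup B$ is $u_k\coloneqq\chi_A v_k+\chi_B w_k$ for $k=1,\ldots,i$. Independence follows from the disjointness of $A$ and $B$: if $\sum_k f_k u_k=0$ $\m$-a.e.\ on $A\cup B$, then restricting to $A$ gives $\sum_k (f_k\chi_A)v_k=0$ $\m$-a.e.\ on $A$, whence $f_k=0$ on $A$ by independence of the $v_k$'s, and analogously on $B$. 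For spanning, any $u\in\mathcal{M}$ splits as $\chi_A u+\chi_B u$; these two pieces are approximated in $\mathcal{M}_{|A}$ and $\mathcal{M}_{|B}$ by $\Lebspace^\infty$-combinations of the $v_k$'s and $w_k$'s respectively, which assemble into $\Lebspace^\infty$-combinations of the $u_k$'s approximating $\chi_{A\cup B}u$. A routine countable-union argument then shows that $\mathcal{F}_i\coloneqq\{A\in\Borel(\X):\mathcal{M}\text{ has dimension }i\text{ on }A\}$ is stable under countable (essentially disjoint) unions, so a sequence maximizing $\tilde{\m}$ in $\mathcal{F}_i$ produces $\tilde E_i\in\mathcal{F}_i$ of maximal $\tilde{\m}$-measure.

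The $\tilde E_i$'s are essentially pairwise disjoint: if $\m(\tilde E_i\cap\tilde E_j)>0$ for $i\neq j$, then on the intersection $\mathcal{M}$ would admit bases of two different cardinalities, contradicting the well-posedness of local dimension recalled above. Removing null overlaps produces a genuinely disjoint family $\{E_i\}_{i\in\N}$, and I set $E_\infty\coloneqq\X\setminus\bigcup_{i\in\N}E_i$. Property (2) holds by contradiction: if $E\subset E_\infty$ had positive measure and finite dimension $k$, then by the glueing lemma $E\cup E_k\in\mathcal{F}_k$ would have $\tilde{\m}$-measure strictly exceeding the maximal value $\tilde{\m}(E_k)$. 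Uniqueness is immediate: for any other such partition $\{F_i\}$ one has $\m(F_i\cap E_j)=0$ whenever $i,j$ are distinct finite indices (well-posedness of dimension), and the intersections involving the $\infty$-index are null by property (2) of the respective partitions. The heart of the argument is the glueing lemma; the rest is a standard exhaustion and will be the main obstacle only in the bookkeeping of characteristic functions within the module structure.
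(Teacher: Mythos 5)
The paper states this proposition without proof, referring the reader to \cite{Gigli14}, where the argument is precisely the exhaustion-plus-glueing scheme you describe. Your proposal is correct; the only points left implicit --- that local dimension passes to positive-measure subsets (needed both to disjointify the maximizing sequence and in the uniqueness step) and that the glued basis elements over a countable disjoint union must be rescaled so that the defining series converges in $\mathcal{M}$ --- are exactly the routine bookkeeping you acknowledge at the end.
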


We now introduce the notion of $\emph{pullback module}$ which, roughly speaking, is nothing but a module over a space $\X$ obtained by pulling back
a module on another space $\Y$ via a certain map.

\begin{definition}[Pullback]
Let  $(\X,\dist_\X,\m_\X)$ and $(\Y,\dist_\Y,\m_\Y)$ be metric measure spaces, $\varphi: \X\longrightarrow \Y$ a map of bounded compression and $\mathcal{M}$ and $\Lebspace^p(\m_\Y)$-normed module. Then there exists a unique, up to unique isomorphism, couple $(\varphi^*\mathcal{M},\varphi^*)$ with $\varphi^*\mathcal{M}$ being an $\Lebspace^p(\m_\X)$-normed module and $\varphi^*:\mathcal{M}\longrightarrow\varphi^*\mathcal{M}$ being a linear and continuous operator such that:
\newline
\begin{enumerate}
    \item $|\varphi^*v|=|v|\circ\varphi$ holds $\m_\X$-a.e., for every $v\in\mathcal{M}$, \\
    \item the set $\{\varphi^*v\st v\in\mathcal{M}\}$ generates $\varphi^*\mathcal{M}$ as a module.
\end{enumerate}

\end{definition}
At this point one can try to understand what is the relation between the dimension of a module and the one of its pullback via the map $\varphi$ and in order to do so we need to introduce a sort of $\emph{left inverse}$ of the pullback operator $\varphi^*$. To do so let us assume $\varphi_\sharp\m_\X=\m_Y$ to simplify the exposition.

For $f\in \Lebspace^p(\m_\X)$ nonnegative we put
\begin{equation}
\label{Projection map}
\Prphi_\varphi(f):=\frac{\de\varphi_\sharp(f\m_\X)}{\de\m_\Y}
\end{equation}
and in a natural way we set $\Prphi_\varphi(f):=\Prphi_\varphi(f^+)-\Prphi_\varphi(f^-)$ for general $f\in \Lebspace^p(\m_\X)$. 

For the next proposition we need to recall the classical Disintegration theorem. The statement below is taken from \cite[Theorem 5.3.1]{AmbrosioGigliSavare08}, see also \cite[Chapter 452]{Fremlin4} and \cite[Chapter 10.6]{Bog07}

\begin{theorem}[Disintegration]
Let $\X,\Y$ be complete and separable metric spaces, $\mu\in\mathcal{P}(\X)$, let $\pi:\X\to\Y$ be a Borel map and let $\nu=\pi_\sharp\mu\in\mathcal{P}(\Y)$. Then there
exists a $\nu$-a.e. uniquely determined Borel family of probability measures $\{\mu_y\}_{y\in\Y}\subseteq\mathcal{P}(\X)$  such that
$\mu_x(\X\setminus\pi^{-1}(\{y\}))=0$ for $\nu$-a.e. $y\in\X$ and 
\begin{equation}
    \label{disintegration}
    \int_{\X}f\de\mu = \int_\Y\biggl(\int_{\pi^{-1}(\{y\})}f\de\mu_y\biggr)\de\nu(y)
\end{equation}
for every Borel map $f:\X\to[0,+\infty]$.

\end{theorem}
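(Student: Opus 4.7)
The plan is to treat uniqueness by a monotone class argument and to prove existence by constructing a candidate via Radon--Nikodym on a countable generating algebra, then upgrading the resulting finitely additive set function to a genuine Borel probability measure using the Polish structure of $\X$.

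For uniqueness, assume both $\{\mu_y\}$ and $\{\mu_y'\}$ satisfy \eqref{disintegration}. Applying the formula to $f=\chi_A\chi_{\pi^{-1}(B)}$ with $A\in\Borel(\X)$ and $B\in\Borel(\Y)$ yields $\int_B\mu_y(A)\de\nu(y)=\int_B\mu_y'(A)\de\nu(y)$ for every such $B$, hence $\mu_y(A)=\mu_y'(A)$ for $\nu$-a.e. $y$, with an exceptional null set that in principle depends on $A$. Choosing a countable algebra $\mathcal{A}\subset\Borel(\X)$ generating $\Borel(\X)$ (which exists by separability of $\X$) and discarding the countable union of the corresponding null sets, we obtain $\mu_y=\mu_y'$ on $\mathcal{A}$ outside a single $\nu$-null set, hence on $\Borel(\X)$ by a standard Dynkin $\pi$-$\lambda$ argument.

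For existence, note that for each $A\in\Borel(\X)$ the pushforward $\pi_\sharp(\chi_A\mu)$ is absolutely continuous with respect to $\nu$, so the Radon--Nikodym derivative
\[
g_A(y)\defeq \frac{\de\pi_\sharp(\chi_A\mu)}{\de\nu}(y)
\]
is defined $\nu$-a.e., takes values in $[0,1]$, satisfies $g_\X\equiv 1$, and is finitely additive in $A$. Fixing a countable generating algebra $\mathcal{A}$, picking representatives $g_A$ for $A\in\mathcal{A}$, and discarding a single $\nu$-null set, for $\nu$-a.e. $y$ the assignment $A\mapsto g_A(y)$ becomes a finitely additive $[0,1]$-valued set function on $\mathcal{A}$ with total mass $1$.

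The main obstacle is to upgrade, for $\nu$-a.e. $y$, this finitely additive set function on $\mathcal{A}$ to a countably additive Borel probability measure $\mu_y$ on the full $\Borel(\X)$, and this is the step where the Polish hypothesis is essential. By inner regularity of $\mu$, for every $n\in\N$ there is a compact $K_n\subset\X$ with $\mu(\X\setminus K_n)<n^{-2}$, so $\int g_{\X\setminus K_n}\,\de\nu<n^{-2}$; a Borel--Cantelli argument then produces a further $\nu$-null set outside which $g_{\X\setminus K_n}(y)\to 0$. Combined with the finite additivity on $\mathcal{A}$, this tightness estimate lets Carath\'eodory extension yield a Borel probability measure $\mu_y$ with $\mu_y(A)=g_A(y)$ for every $A\in\mathcal{A}$; concentration of $\mu_y$ on $\pi^{-1}(\{y\})$ is obtained by testing on sets $A=\pi^{-1}(V)$ for $V$ ranging in a countable basis of neighbourhoods of $y$ in $\Y$, since $g_{\pi^{-1}(V)}(y)=1$ whenever $y\in V$. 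The identity \eqref{disintegration} for a general nonnegative Borel $f$ then follows by linearity from simple functions and monotone convergence, while Borel measurability of $y\mapsto\mu_y(A)$ for arbitrary $A\in\Borel(\X)$ is inherited from $\mathcal{A}$ through the same monotone class machinery used for uniqueness.
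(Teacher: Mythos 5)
The paper does not prove this statement at all: it is quoted verbatim as a classical result, with the proof delegated to \cite[Theorem 5.3.1]{AmbrosioGigliSavare08} (see also the cited chapters of Fremlin and Bogachev). So there is no in-paper argument to compare yours against; what follows is an assessment of your sketch on its own terms.

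Your outline is the standard proof and is essentially sound: uniqueness via testing on $\chi_A\chi_{\pi^{-1}(B)}$ plus a countable generating algebra and Dynkin's lemma is correct; the Radon--Nikodym construction $g_A=\tfrac{\de\pi_\sharp(\chi_A\mu)}{\de\nu}$ is the right candidate; and the concentration argument via $g_{\pi^{-1}(V)}=\chi_V$ for $V$ in a countable base of $\Y$ works (one discards a null set for each basic $V$ and uses $\pi^{-1}(\{y\})=\bigcap\{\pi^{-1}(V):y\in V,\ V\ \text{basic}\}$). The one place where you are too quick is the step you yourself flag as the main obstacle. Knowing that $g_{\X\setminus K_n}(y)\to0$ gives tightness of the finitely additive set function only at the level of the whole space; to invoke Carath\'eodory you need countable additivity on the algebra $\mathcal A$, which requires inner approximation by a compact class for \emph{every} $A\in\mathcal A$, not just $A=\X$. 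This is fixable but needs a choice: either take $\mathcal A$ generated by a countable family of closed sets and check that $g_A(y)-g_{A\cap K_n}(y)\le g_{\X\setminus K_n}(y)$ lets the Marczewski compact-class criterion apply, or (as in \cite{AmbrosioGigliSavare08}) sidestep set algebras entirely by embedding $\X$ in a compact metric space, defining $y\mapsto\Lambda_y(f)=\tfrac{\de\pi_\sharp(f\mu)}{\de\nu}(y)$ on a countable dense $\Q$-subalgebra of continuous functions, extending to a positive functional and applying Riesz representation, with your tightness estimate then showing $\mu_y(\X)=1$. Either completion turns your sketch into a full proof; as written, the sentence ``this tightness estimate lets Carath\'eodory extension yield a Borel probability measure'' papers over the only genuinely nontrivial point of the theorem.
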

\begin{remark}
Two remarks are in order here: the first one is that the above theorem in \cite{AmbrosioGigliSavare08} is stated for Radon separable metric space but in our setting it suffices to state it for complete and separable ones (which in particular are Radon), the second is that the result easily extends to any $f:\X\to\R$ Borel provided for example that $f\in L^1(\mu)$.
\end{remark}
We now recall some properties of the map $\Pr_\varphi$.
\begin{proposition}
The operator $\Prphi_\varphi: \Lebspace^p(\m_\X)\longrightarrow \Lebspace^p(\m_Y)$ is linear, continuous and 
\begin{equation}
    \label{RN and disintegration}
    \Prphi_\varphi(f)(y) = \int_\X f(x)\de\m_y(x)\quad\m_\Y-\text{a.e.},\quad\forall f\in \Lebspace^p(\m_\X),
\end{equation}
where $y\mapsto m_y$ denotes the disintegration of $\m_\X$ with respect to the map $\varphi$. Finally it holds
\begin{equation}
    \label{Projection contraction}
    |\Prphi_\varphi(f)|\leq\Prphi_\varphi(|f|)\quad\m_\Y-\text{a.e.}
\end{equation}
\end{proposition}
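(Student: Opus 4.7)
The plan is to establish the integral representation \eqref{RN and disintegration} first, as both linearity and the contraction \eqref{Projection contraction} then follow essentially automatically, and continuity is a direct consequence of \eqref{Projection contraction} combined with Jensen's inequality. For nonnegative $f\in \Lebspace^p(\m_\X)$, the definition of $\Prphi_\varphi(f)$ as the Radon-Nikodym density of $\varphi_\sharp(f\m_\X)$ with respect to $\m_\Y$ reduces matters to checking that the function $y\mapsto\int_\X f\de\m_y$ shares the same defining property, namely that
\begin{equation*}
\int_{\varphi^{-1}(A)} f\de\m_\X = \int_A\left(\int_\X f\de\m_y\right)\de\m_\Y(y)\qquad\forall A\in\Borel(\Y).
\end{equation*}
But this is exactly what the disintegration identity \eqref{disintegration} gives when applied to the Borel function $\chi_{\varphi^{-1}(A)}f$: the left-hand side is $\int_{\varphi^{-1}(A)} f\de\m_\X$, while on the right-hand side, since $\m_y$ is concentrated on $\varphi^{-1}(\{y\})$ and therefore $\chi_{\varphi^{-1}(A)}(x)=\chi_A(\varphi(x))=\chi_A(y)$ for $\m_y$-a.e.\ $x$, one recovers $\int_A\int_\X f\de\m_y\,\de\m_\Y(y)$. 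The extension to signed $f$ is then done via $\Prphi_\varphi(f)=\Prphi_\varphi(f^+)-\Prphi_\varphi(f^-)$.

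Linearity on nonnegative functions is inherited from the linearity of $f\mapsto\varphi_\sharp(f\m_\X)$ together with the linearity of the Radon-Nikodym derivative, and the signed extension preserves it by design. The contraction \eqref{Projection contraction} is then just the triangle inequality for integrals applied to \eqref{RN and disintegration}. For continuity, combining \eqref{Projection contraction}, the representation \eqref{RN and disintegration} applied to $|f|$, and Jensen's inequality (recalling that each $\m_y$ is a probability measure) one obtains
\begin{equation*}
|\Prphi_\varphi(f)(y)|^p\leq\Prphi_\varphi(|f|)(y)^p=\left(\int_\X |f|\de\m_y\right)^p\leq\int_\X |f|^p\de\m_y,
\end{equation*}
and integrating against $\m_\Y$ together with a further application of \eqref{disintegration} with $|f|^p$ in place of $f$ yields $\norm{\Prphi_\varphi(f)}_{\Lebspace^p(\m_\Y)}\leq\norm{f}_{\Lebspace^p(\m_\X)}$.

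The main technical subtlety is that the disintegration theorem as stated applies to probability measures, whereas $\m_\X$ is merely Radon and finite on balls. This will be handled by the usual $\sigma$-finite decomposition: writing $\Y=\bigsqcup_n Y_n$ with $\m_\Y(Y_n)<\infty$ (possible since $\m_\Y$ is Radon), the measures $\m_\X$ restricted to $\varphi^{-1}(Y_n)$ and $\m_\Y$ restricted to $Y_n$ are finite and may be normalised to probabilities, producing disintegrations piece by piece that glue to the desired family $\{\m_y\}_{y\in\Y}$.
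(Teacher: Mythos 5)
Your proposal is correct and follows essentially the same route as the paper: the integral representation is verified by testing against Borel sets $A\subseteq\Y$ via the disintegration identity, the contraction is the triangle inequality for integrals, and continuity follows from Jensen's inequality applied to the probability measures $\m_y$. Your added remark about reducing the disintegration theorem from probability measures to the $\sigma$-finite case is a reasonable technical supplement that the paper leaves implicit.
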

\begin{proof}
Linearity is a consequence of the linearity of the integral. Formula ($\ref{Projection contraction}$) is also trivial while for ($\ref{RN and disintegration}$) we have for any $A\in\Borel(\Y)$
\begin{equation*}
    \int_{A}\Prphi_\varphi(f)(y)\de\m_\Y = \int_A\de\varphi_\sharp(f\de\m_\X)=\int_{\varphi^{-1}(A)}f(x)\de\m_\X,
\end{equation*}
and by the properties of the disintegration we have
\begin{equation*}
\int_{\varphi^{-1}(A)}f(x)\de\m_\X=\int_\Y\int_{\varphi^{-1}(A)}f(x)\de\m_y(x)\de\m_\Y(y) = \int_A\int_{\X}f(x)\de\m_y(x)\de\m_\Y(y),
\end{equation*}
therefore proving ($\ref{RN and disintegration}$). 

To prove continuity note that the case $p=\infty$ is due to formula ($\ref{Projection contraction}$) while continuity in $\Lebspace^p(\m)$ for every $p\in[1,+\infty)$ follows from the following 
\begin{equation*}
\int_{\Y}|\Prphi_{\varphi}|^p\de\m_{\Y}=\int_{\Y}\biggl|\int_{\X}f(x)\de\m_y(x)\biggr|^p\de\m_{\Y}(y)\leq\int_{\Y}\int_{\X}|f(x)|^p\de\m_y(x)\de\m_{\Y}(y)=\norm{f}^p_{\Lebspace^p(\m)},
\end{equation*}
where we used Jensen's inequality and the properties of the disintegration.
\end{proof}

In the case of a general $\Lebspace^p(\m_\X)$-normed module the continuous operator $\Prphi_\varphi:\varphi^*\mathcal{M}:\longrightarrow\mathcal{M}$ can be characterized by the following properties:
\begin{align}
    \label{prop of projection}
    g\Prphi_\varphi(v)=\Prphi_\varphi(g\circ\varphi v),\quad\forall v\in\mathcal{M}\quad\forall g\in \rm{L}^\infty(\m_\X) \\ 
    \Prphi_\varphi(g\varphi^*v) = \Prphi_\varphi(g)v\quad\forall v\in\mathcal{M}\quad\forall g\in \rm{L}^\infty(\m_\X),
\end{align}
with the bound $|\Prphi_\varphi(V)|\leq\Prphi_\varphi(|V|)$ still holding $\m_\Y$-a.e. for every $V\in\varphi^*\mathcal{M}$.

With these objects we are now able to describe the structure of the pullback module, in particular (as one can expect by reasoning via pre-composition) the pullback of an $n$-dimensional module $\mathcal{M}$ over $E$ is an $n$-dimensional module over $\varphi^{-1}(E)$ (see also \cite{PE18}).

\begin{proposition}
Let $\mathcal{M}$ be an $\Lebspace^p(\m_\Y)$-normed module over the m.m.s. $(\Y,d_\Y,\mu)$ and let $E\in\mathcal{B}(Y)$ be a Borel set where $\mathcal{M}$ has dimension $n$, with $\{v_1,...,v_n\}$ being a basis. Let $(\X,d_\X,\m)$ be another m.m.s. and $\varphi:\X\to \Y$ be a map of bounded compression such that $\varphi_{\sharp}\m_\X = \m_\Y$, then $\{\varphi^{*}v_1,...,\varphi^{*}v_n\}$ is a basis of $\varphi^{*}\mathcal{M}$ over $\varphi^{-1}(E)$. 
\end{proposition}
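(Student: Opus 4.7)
The plan is to verify the two defining properties of a basis separately: that $\{\varphi^*v_1,\ldots,\varphi^*v_n\}$ generates $\varphi^*\mathcal{M}$ on $\varphi^{-1}(E)$, and that it is independent there.

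For the generation part, I would use that $\varphi^*$ is continuous and linear with $\varphi^*(fv)=(f\circ\varphi)\varphi^*v$, and that by the defining property of the pullback the set $\{\varphi^*v\st v\in\mathcal{M}\}$ generates $\varphi^*\mathcal{M}$. Since $\{v_i\}_{i=1}^n$ is a basis on $E$, for each $v\in\mathcal{M}$ one has $\chi_Ev=\lim_k\sum_ih_i^kv_i$ in $\mathcal{M}$ for suitable $h_i^k\in\rm{L}^\infty(\m_\Y)$; applying $\varphi^*$ and using $\chi_E\circ\varphi=\chi_{\varphi^{-1}(E)}$ yields
\[
\chi_{\varphi^{-1}(E)}\varphi^*v=\varphi^*(\chi_Ev)=\lim_{k\to\infty}\sum_i(h_i^k\circ\varphi)\varphi^*v_i,
\]
so that $\chi_{\varphi^{-1}(E)}\varphi^*v\in\text{Span}_{\varphi^{-1}(E)}(\{\varphi^*v_i\})$. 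Taking further $\rm{L}^\infty(\m_\X)$-combinations and using the density of pullbacks in $\varphi^*\mathcal{M}$ then upgrades this to generation on $\varphi^{-1}(E)$.

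The substantial content is the independence claim. Suppose $f_1,\ldots,f_n\in\rm{L}^\infty(\m_\X)$ satisfy $\sum_if_i\varphi^*v_i=0$ $\m_\X$-a.e.\ on $\varphi^{-1}(E)$. For an arbitrary $g\in\rm{L}^\infty(\m_\X)$, the relation $\sum_i(gf_i\chi_{\varphi^{-1}(E)})\varphi^*v_i=0$ now holds on the whole of $\X$. Applying $\Prphi_\varphi$ together with the identity $\Prphi_\varphi(h\varphi^*v)=\Prphi_\varphi(h)v$ gives $\sum_i\Prphi_\varphi(gf_i\chi_{\varphi^{-1}(E)})v_i=0$ in $\mathcal{M}$, and since $\{v_i\}$ is independent on $E$ this forces $\Prphi_\varphi(gf_i\chi_{\varphi^{-1}(E)})=0$ on $E$ for each $i$ and every $g$. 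By (\ref{RN and disintegration}), together with the fact that $\m_y$ is concentrated on $\varphi^{-1}(\{y\})\subset\varphi^{-1}(E)$ for $y\in E$, this reads $\int gf_i\de\m_y=0$ for $\m_\Y$-a.e.\ $y\in E$. The choice $g=\text{sgn}(f_i)$ then produces $\int|f_i|\de\m_y=0$ for $\m_\Y$-a.e.\ $y\in E$, and another application of the disintegration of $\m_\X$ gives $\int_{\varphi^{-1}(E)}|f_i|\de\m_\X=0$, i.e.\ $f_i=0$ $\m_\X$-a.e.\ on $\varphi^{-1}(E)$.

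The main obstacle is precisely this last step, where one must pass from information about $\Prphi_\varphi(gf_i\chi_{\varphi^{-1}(E)})$ holding for every $g$ to the pointwise vanishing of $f_i$ on $\varphi^{-1}(E)$: the independence hypothesis lives over $E\subset\Y$ with $\rm{L}^\infty(\m_\Y)$-coefficients, while the conclusion must be phrased with $\rm{L}^\infty(\m_\X)$-coefficients over $\varphi^{-1}(E)$. The operator $\Prphi_\varphi$ and the disintegration bridge the two sides, and the sign trick sidesteps the delicate issue of choosing a single $\m_\Y$-null set valid simultaneously for all test functions $g$.
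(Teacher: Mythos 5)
Your proof is correct and follows essentially the same route as the paper: generation is obtained by pushing an $\rm{L}^\infty(\m_\Y)$-representation of $\chi_E v$ through $\varphi^*$ using $\varphi^*(hv)=(h\circ\varphi)\varphi^*v$, and independence by applying $\Prphi_\varphi$ together with the identity $\Prphi_\varphi(h\varphi^*v)=\Prphi_\varphi(h)v$ and the positivity coming from the disintegration. The only (cosmetic) difference is that the paper argues independence by contradiction, reducing to a coefficient that is positive on a set of positive measure, whereas you argue directly via the test function $g=\mathrm{sgn}(f_i)$; your version is in fact slightly cleaner about localizing with $\chi_{\varphi^{-1}(E)}$ before projecting.
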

\begin{proof}
We first prove that $\{\varphi^*v_1,...,\varphi^*v_n\}$ generate $\varphi^*\mathcal{M}$ over $\varphi^{-1}(E)$.

First recall that $\varphi^*\mathcal{M}$ is generated (as module) by $\{\varphi^{*}v: v\in\mathcal{M}\}=:V$. Let us show that $V\subseteq\text{Span}_{\varphi^{-1}(E)}\{\varphi^{*}v_1,...,\varphi^{*}v_n\}$: pick $w\in V$, then there exists $v\in\mathcal{M}$ such that $w=\varphi^{*}v$ so that there exists $(A_j)_j\subseteq\Borel(\X)$ partition of $E$ and $(g_i^j)_{j\in\N}\subset \rm{L}^\infty(\m_\Y)$ $\forall i=1,...,n$ such that 
\begin{equation*}
    \chi_{A_j}v = \sum_{i=1}^n g_i^j v_i\qquad\forall j\in\N
\end{equation*}
Using the linearity of the pullback map and the fact that $\varphi^*(g v)=g\circ\varphi\varphi^{*}v$ for all $v\in\mathcal{M}$, $g\in \rm{L}^\infty(\m_\Y)$  we get
\begin{equation*}
    \chi_{\varphi^{-1}(A_j)}w = \sum_{i=1}^n g_i^j\circ\varphi\varphi^{*}v_i.
\end{equation*}
Finally, since the pullback module has a natural structure of $\Lebspace^p(\m)$-normed $\rm{L}^\infty(\m)$-module, we get that $\text{Span}_{\varphi^{-1}(E)}\{\varphi^{*}v_1,...,\varphi^{*}v_n\}$ is closed, proving the first result.

We now turn to local independence: assume by contradiction $\{\varphi^*v_1,...,\varphi^{*}v_n\}$ are not independent on $\varphi^{-1}(E)$ then there exist $f_1,...,f_n\in \rm{L}^\infty(\m_\X)$ such that $\sum_{i=1}^n f_i\varphi^{*}v_i=0$ $\m$-a.e. with (upon relabeling indexes) $|f_1|>0$ $\m$-a.e. on some subset $\tilde{E}$ of positive measure. Without loss of generality, possibly considering a smaller set, we shall assume $f_1>0$ $\m$-a.e. so that
\begin{equation*}
    \sum_{i=1}^n f_i\varphi^*v_i=0\quad\m-\text{a.e. on}\;\tilde{E}\implies\sum_{i=1}^n \text{Pr}_\varphi(f_i) v_i=0\quad\m-\text{a.e. on}\;\tilde{E}.
\end{equation*}
However note that $\text{Pr}_{\varphi}(f_1)>0 $ on some set of positive $\m_\Y$ measure, contradicting the independence of the $v_i$s.

\end{proof}

\begin{remark}
We stress again that the assumption $\varphi_\sharp\m_\X=\m_\Y$ is purely for the sake of exposition.
\end{remark}

\begin{definition}
We say that the space of $\rm{L}^\infty(\m)$-linear and continuous maps $L:\mathcal{M}\to L^1(\m)$ is the dual module of the module $\mathcal{M}$ and we shall denote this space by $\mathcal{M}^*$.
\end{definition}
\begin{remark}
Being $\mathcal{M}$ $\Lebspace^p(\m)$-normed, we can endow $\mathcal{M}^*$ with a natural structure of $\Lebspace^q(\m)$-normed module.
\end{remark}
We are now in position to speak about the differential of a Sobolev function as the following proposition shows.
\begin{proposition}
\label{Differential module}
Let $(\X,\de,\m)$ be a metric measure space, then there exists a unique (up to isomorphism) couple $(\Lebspace^p(\T^{*}\X),\de_p)$ where $\Lebspace^p(\T^{*}\X)$ is an $\Lebspace^p(\m)$-normed $\rm{L}^\infty(\m)$-module and $\de_p: W^{1,p}(\X)\to \Lebspace^p(\T^{*}\X)$ is a linear and continuous operator such that:
\begin{enumerate}
    \item $|\de_p f|=|Df|_p$ $\m$-a.e. for every $f\in W^{1,p}(\X)$,
    \item The set $\{\de f:\; f\in W^{1,p}(\X)\}$ generates $\Lebspace^p(\T^{*}\X)$.
\end{enumerate}
\end{proposition}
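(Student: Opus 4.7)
The plan is to construct the cotangent module by a completion procedure starting from formal combinations of differentials, in the spirit of \cite{Gigli14}. First I would introduce the \emph{pre-cotangent module} $\mathcal{G}$ as the collection of formal objects $(\{A_i\}_{i\in\N}, \{f_i\}_{i\in\N})$, where $\{A_i\}_i$ is a Borel partition of $\X$ and $f_i\in W^{1,p}(\X)$, quotiented by the equivalence relation that identifies two such representatives when, on the common refinement of their partitions, the corresponding pairs of Sobolev functions have matching $p$-weak upper gradient locally. On $\mathcal{G}$ I would define the pointwise norm by $|[(A_i,f_i)]|:=\sum_i \chi_{A_i}|Df_i|_p$ and the multiplication by simple functions $g=\sum_j \lambda_j \chi_{B_j}$ through passage to a common refinement, namely $g\cdot [(A_i,f_i)_i]:=[(A_i\cap B_j, \lambda_j f_i)_{i,j}]$.

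The analytical heart of the construction is then the verification that $|\cdot|$ descends to the quotient and satisfies the module pointwise-norm axioms: everything here reduces to the locality properties of the minimal $p$-weak upper gradient, namely $|Df|_p=|Dg|_p$ $\m$-a.e.\ on $\{f=g\}$ and $|D(\lambda f)|_p=|\lambda|\,|Df|_p$ for scalars $\lambda$. Granted these, the quantity $\|v\|_p:=\big\||v|\big\|_{\Lebspace^p(\m)}$ is a genuine norm on $\mathcal{G}$ after quotienting by its kernel, and I would define $\Lebspace^p(\T^*\X)$ as the metric completion. The $\rm{L}^\infty(\m)$-action extends first to general $g\in \rm{L}^\infty(\m)$ by density of simple functions and then to all of $\Lebspace^p(\T^*\X)$ by continuity, providing the required $\Lebspace^p(\m)$-normed $\rm{L}^\infty(\m)$-module structure.

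I would then define $\de_p:W^{1,p}(\X)\to \Lebspace^p(\T^*\X)$ by $\de_p f:=[(\{\X\},\{f\})]$; linearity and the pointwise identity $|\de_p f|=|Df|_p$ are immediate from the construction, and this identity in turn gives continuity via $\|\de_p f\|_p=\||Df|_p\|_{\Lebspace^p(\m)}\le \|f\|_{W^{1,p}}$. Property (ii) is built into the definition of $\mathcal{G}$, since $\rm{L}^\infty(\m)$-linear combinations of the form $\sum_i \chi_{A_i}\de_p f_i$ form a dense subset by construction. Uniqueness up to unique isomorphism follows by the standard functorial argument: given another pair $(\mathcal{M},\de')$, one sets $\Phi(\sum_i \chi_{A_i}\de_p f_i):=\sum_i \chi_{A_i}\de' f_i$; condition (i) on both sides makes $\Phi$ an isometry on this dense subspace while condition (ii) makes its image dense, so $\Phi$ extends uniquely to an isomorphism of $\Lebspace^p(\m)$-normed modules.

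I expect the main obstacle to lie in the first two steps, specifically in checking that the pointwise norm is well-defined on equivalence classes and compatible with the refinement operation underlying the $\rm{L}^\infty$-multiplication; this is precisely where the locality properties of $|Df|_p$ are indispensable, whereas every subsequent step is essentially categorical.
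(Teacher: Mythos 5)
The paper itself offers no proof of this proposition: it is recalled verbatim from \cite{Gigli14}, whose construction is exactly the completion procedure you outline (pre-cotangent module of formal sums $\sum_i\chi_{A_i}\de f_i$, pointwise norm, quotient, completion, extension of the $\rm{L}^\infty(\m)$-action by density, and the categorical uniqueness argument). So your route is the standard one and the overall architecture is sound.

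One point in your write-up needs to be made precise, because as literally stated it would destroy the construction. You identify two representatives when ``the corresponding pairs of Sobolev functions have matching $p$-weak upper gradient locally.'' If this means $|Df_i|_p=|Dg_j|_p$ $\m$-a.e.\ on $A_i\cap B_j$, the relation is wrong: $f$ and $-f$ have identical minimal weak upper gradients everywhere, so they would be identified, forcing $\de_p f=\de_p(-f)=-\de_p f$ and hence $\de_p\equiv 0$. The correct relation is $|D(f_i-g_j)|_p=0$ $\m$-a.e.\ on $A_i\cap B_j$. Correspondingly, the locality property you invoke for well-definedness of the pointwise norm is not quite the one needed: what makes $|\cdot|$ descend to the quotient is the pointwise bound $\bigl||Df|_p-|Dg|_p\bigr|\le|D(f-g)|_p$ $\m$-a.e.\ (a consequence of subadditivity of minimal weak upper gradients), rather than the identity $|Df|_p=|Dg|_p$ on $\{f=g\}$. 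With these two corrections the remaining steps --- compatibility with the refinement operation, density of simple functions, extension by continuity, and the isometric-isomorphism argument for uniqueness --- go through exactly as you describe.
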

\begin{remark}
We will call \emph{1-forms} the elements of $\Lebspace^p(\T^{*}\X)$, in analogy with the section of the cotangent bundle on a Riemannian manifold. 
\end{remark}
\begin{definition}
We denote with $\Lebspace^q(\T\X)$ the dual module of $\Lebspace^p(\T^{*}\X)$ and we call its elements vector fields or vectors.
\end{definition}

Besides the differential of a Sobolev function introduced in $\ref{Differential module}$, one can give another definition which exploits the fact that the map is Lipschitz and of bounded compression: this class of maps is that of $\emph{bounded deformation}$. In this direction we need to recall the notion of $\emph{pullback of forms}$: in order to distinguish it from the pullback of a module we shall proceed denoting with $\omega\mapsto[\varphi^*\omega]$ the pullback map and with $\varphi^*$ the pullback of 1-forms which is the following:
\begin{definition}
Let $\varphi:\X\to\Y$ be a map of bounded deformation, then we define $\varphi^*:\Lebspace^p(\T^*\Y)\to \Lebspace^p(\T^{*}\X)$ to be the linear map such that $\varphi^*(\de f)=\de(f\circ\varphi)$ for all $f\in W^{1,p}(\Y)$ and $\varphi^*(g\omega)=g\circ\varphi\varphi^*\omega$ for all $g\in \rm{L}^\infty(\Y)$ and $\omega\in \Lebspace^p(\T^*\Y)$.
\end{definition}
\begin{remark}
It is easy to see that, thanks to the regularity properties of $\varphi$, the pullback of 1-forms $\varphi^*$ is well defined.
\end{remark}
\begin{definition}
\label{Diff of bdd deformation}
\begin{equation}
    \label{BD differential}
    [\varphi^*\omega](\underline{\de_p\varphi}(v))=\varphi^*\omega(v)\quad\forall v\in \Lebspace^q(\T\X),\;\;\forall\omega\in \Lebspace^p(\T^*\Y).
\end{equation}
\end{definition}

In the recent work \cite{EBS21} the authors provide some ``charts" over Borel sets $(E_i)_{i\in\N}$ partitioning the metric measure space $\m$-a.e.: we will briefly recall here the definition
\begin{definition}
\label{Ebs chart}
We say $\varphi:\X\to\R^N$ is an EBS chart over the Borel set $E$ if it is a Lipschitz map with the following properties
\begin{enumerate}
    \item (p-independence) $\essinf_{v\in\mathbb{S}^{N-1}}|D(v\cdot\varphi)|_p>0$ $\m$-a.e on $E$.
    \item (maximality) There is no other Lipschitz map $\varphi:\X\to\R^M$ with $M>N$ which is p-independent on a subset of $E$ of positive measure.
\end{enumerate}
\end{definition}
The authors proved that the condition of p-independence over a set $E$ is equivalent to the fact that the $\Lebspace^p(\T^{*}\X)$ module over $E$ is generated by the differentials of the components of the chart: in other words $\{\de\varphi^1,...,\de\varphi^N\}$ is a basis for $\Lebspace^p(\T^{*}\X)_{|E}$ (see Lemma 6.3 in \cite{EBS21}) and as a consequence of Theorem 1.4.7 in \cite{Gigli14} we are able to deduce that $\Lebspace^q(\T\X)_{|E}$ is also an $N$-dimensional normed module.

\section{Main result}
\label{Main result section}
In this section we give an alternative proof to Proposition 4.13 in \cite{EBS21}. First we remark that with $\underline{\de_p\varphi}$ we will denote the differential of a map of bounded deformation in the sense of definition $\ref{Diff of bdd deformation}$, while with $\de_p f$ we denote the differential in the sense of Proposition $\ref{Differential module}$. Lastly let us assume that $\m$ is a finite measure: we can do so because of the inner regularity of the measure $\m$. Indeed if for a Borel map $\psi:\X\to\R^n$ we have $\psi_\sharp(\m_{|E_k})<<\Leb^n$ for every $k\in\N$ with $(E_k)_k$ compact, such that $E_{k}\subseteq E_{k+1}$ and $\m(E\setminus\cup_k E_k)=0$, then $\psi_\sharp(\m_{|E})<<\Leb^n$.

We begin with the following simple lemma which follows standard arguments in linear algebra:
\begin{lemma}
\label{Algebraic Lemma}
Let $\mathcal{M}$ be an $\Lebspace^p(\m)$-normed module and $\mathcal{M}^{*}$ be its dual module. Assume that $\mathcal{M}$ has dimension $n$ over $E$: then $\{v_1,...,v_n\}$ and $\{\omega_1,...,\omega_n\}$ are basis of $\mathcal{M}^*$ and $\mathcal{M}$ (respectively) over $E$ if and only if $\text{det}[ \omega_i(v_j)]_{ij}>0$ $\m$-a.e. on $E$.
\end{lemma}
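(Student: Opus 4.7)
The plan is to translate the standard linear-algebra characterization of dual bases via invertibility of the pairing matrix into the $\rm{L}^\infty(\m)$-module setting. Throughout, $M(x) := [\omega_i(v_j)(x)]_{ij}$ denotes the pointwise pairing matrix, with the canonical identification $\omega_i(v_j) = v_j(\omega_i) \in L^1(\m)$.

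For \textbf{sufficiency} ($\det M > 0 \Rightarrow$ bases), the hypothesis makes $M(x)$ pointwise invertible $\m$-a.e.\ on $E$. If $f_1, \ldots, f_n \in \rm{L}^\infty(\m)$ satisfy $\sum_i f_i \omega_i = 0$ on $E$, applying each $\rm{L}^\infty$-linear functional $v_j$ gives $\sum_i f_i \omega_i(v_j) = 0$ $\m$-a.e., i.e. the pointwise system $M^T f = 0$; by invertibility, $f_i = 0$ $\m$-a.e.\ on $E$. The same argument in the double-dual proves independence of $\{v_j\}$. Since $\mathcal{M}$ (and hence $\mathcal{M}^*$) has local dimension $n$ on $E$, these independent families of $n$ elements are automatically bases.

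For \textbf{necessity} (bases $\Rightarrow \det M > 0$), the nontrivial algebraic content is that $\det M \neq 0$ $\m$-a.e.\ on $E$. Suppose on the contrary that $F := \{\det M = 0\} \cap E$ has positive measure. Partitioning $F$ by the Borel-measurable pointwise rank of $M(\cdot)$ and applying the adjugate/cofactor formula on each piece, one obtains bounded measurable $g_1, \ldots, g_n$ on $F$, not all vanishing, with $\sum_j g_j\, \omega_i(v_j) = 0$ $\m$-a.e.\ on $F$ for every $i$. The element $w := \sum_j g_j \chi_F v_j \in \mathcal{M}^*$ then satisfies $w(\omega_i) = 0$ $\m$-a.e.\ for every $i$; since $\{\omega_i\}$ spans $\mathcal{M}_{|F}$, $w$ annihilates $\mathcal{M}_{|F}$, hence $w = 0$ in $\mathcal{M}^*_{|F}$. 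Independence of $\{v_j\}$ then forces $g_j = 0$ $\m$-a.e.\ on $F$, a contradiction.

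To upgrade $\det M \neq 0$ to the stated $\det M > 0$, let $E_- := \{\det M < 0\}$ and replace $v_1$ with $v_1' := v_1 \cdot (\chi_{E \setminus E_-} - \chi_{E_-})$; multiplication by an $\rm{L}^\infty$ function of modulus one preserves both independence and spanning, so $\{v_1', v_2, \ldots, v_n\}$ remains a basis of $\mathcal{M}^*$ on $E$, and the new matrix $M'$ satisfies $\det M' > 0$ $\m$-a.e.\ on $E$. \textbf{The main obstacle} is exactly this sign bookkeeping: the strict "$>0$" is not basis-invariant (permuting two $v_j$'s flips the sign), so the forward implication must be read up to an implicit sign convention---the genuinely invariant content being $|\det M| > 0$ $\m$-a.e., which is equivalent to non-degeneracy of the pairing.
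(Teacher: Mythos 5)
Your proof is correct, and the sufficiency direction coincides with the paper's: apply the vanishing combination to the other family, read off the pointwise linear system, and invert the matrix; independence plus the dimension hypothesis then upgrades to a basis. The necessity direction, however, takes a genuinely different and more elementary route. The paper, starting from a kernel vector $\underline{g}$ of the pairing matrix on a positive-measure set $C$, forms $\tilde\omega=\sum_j g_j\omega_j\in\mathcal{M}$, observes that every $v_i$ annihilates it, and then needs a separation argument to reach a contradiction: it invokes Hahn--Banach together with the representation of the Banach dual of $\mathcal{M}$ by its module dual (Proposition 1.2.13 in \cite{Gigli14}) to produce $L\in\mathcal{M}^*$ with $\int_C L(\tilde\omega)\de\m>0$, and then expands $L$ in the basis $\{v_i\}$. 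You instead form $w=\sum_j g_j\chi_F v_j\in\mathcal{M}^*$, note that it kills every $\omega_i$ and hence, by ${\rm L}^\infty(\m)$-linearity and continuity, all of $\mathcal{M}_{|F}={\rm Span}_F\{\omega_1,\dots,\omega_n\}$, so $w=0$, and conclude by independence of the $v_j$; this uses only the defining properties of the two bases and dispenses with the functional-analytic input entirely. Your closing remark on the sign is also well taken: the paper's necessity argument in fact only establishes $\det A\neq 0$ (it derives a contradiction from $\det A=0$ on a positive-measure set), and the literal ``$>0$'' is not invariant under permuting a basis; your normalization $v_1\mapsto(\chi_{E\setminus E_-}-\chi_{E_-})v_1$ is the right way to reconcile the statement with its proof. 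The one step you should spell out is the measurable selection of a bounded, not-identically-vanishing kernel vector on $\{\det M=0\}$ (the entries $\omega_i(v_j)$ are only in $\Lebspace^1(\m)$, so one must normalize after the adjugate construction); your rank-stratification sketch is standard and is asserted with no more detail in the paper itself.
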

\begin{proof}
Define $A_{ij}:=[ \omega_i(v_j)]_{ij}$ and let us assume first that $\det A>0$ $\m$-a.e.. It is clearly sufficient to prove the independence: assume by contradiction that $\sum_{i=1}^n g_iv_i=0$ $\m$-a.e. on some subset $B$ of positive measure, for some $g_1,...,g_n$ which are not all zero on $B$ (in the measure theoretic sense). Then consider $\underbar{g}:=(g_1,...,g_n)$ and note that $A\underbar{g}\neq 0$ $\m$-a.e. on $B$ because of the condition on the determinant. However $A\underbar{g}=\sum_{i=1}^n g_iv_i(\underbar{$\omega$})$=0 $\m$-a.e. on $B$, which is clearly a contradiction. This argument trivially applies for $\{\omega_1,...,\omega_n\}$ as well by considering the transpose of $A$.

Assume now that $\{\omega_1,...,\omega_n\}$ and $\{v_1,...,v_n\}$ are basis over $E$ of $\mathcal{M}$ and $\mathcal{M}^*$ respectively and by contradiction let $\det A = 0$ $\m$-a.e. on a Borel subset $C$ of positive measure. Then there exists a further measurable subset (which we won't relabel) $C$ of positive measure and $\underbar{g}\in \rm{L}^\infty(\m)^n$ for which $A\underbar{g}=0$ and $\underbar{g}\neq 0$ $\m$-a.e. on $C$. The latter system of equations means that  we have
\begin{equation}
\label{indep assum}
    v_i\biggl(\sum_{j=1}^n g_j\omega_j\biggr) = 0\quad\m-\text{a.e.}\; \text{on}\; C,\;\forall i=1,...,n.
\end{equation}
Set $\tilde{\omega}=\sum_{j=1}^n g_j\omega_j$ and suppose that $|\tilde{\omega}|\neq 0$ $\m$-a.e. on $C$, then there exists a non-zero continuous functional $\ell\in\mathcal{M}^\prime$ (which is the Banach dual) such that $\ell(\chi_C\tilde{\omega})=||\chi_C\tilde{\omega}||_\mathcal{M}$ and there exists $L\in\mathcal{M}^*$ (see Proposition 1.2.13 in \cite{Gigli14}) such that 
\begin{equation*}
    \ell(\omega) = \int_\X L(\omega)\de\m\quad\forall\omega\in\mathcal{M}.
\end{equation*}
In our case this means that $\norm{\chi_C\tilde{\omega}}_\mathcal{M}=\int_{C}L(\tilde{\omega})\de\m > 0$, so that there must be a Borel set of positive measure where $\chi_C L(\tilde{\omega})>0$, which contradicts ($\ref{indep assum}$) since there exists $D\subset C$ with $\m(D)>0$ such that $\chi_D L=\sum_{i=1}^n f_i v_i$ for some $f_1,...,f_n\in \rm{L}^\infty(\m)$. 
\end{proof}

\begin{lemma}
\label{Pullback and cotangent}
Let $\varphi$ be an EBS chart over the Borel set $E$ and $\{v_1,...,v_n\}\in \Lebspace^p(\T\X)$ be independent over $E$, then $\{\underline{\de_p\varphi}(v_1),...,\underline{\de_p\varphi}(v_n)\}\in\varphi^*\Lebspace^p_\mu(T\R^n)$ are independent over the same set.
\end{lemma}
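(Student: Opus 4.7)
The plan is to verify independence directly from its definition. Fix $f_1,\ldots,f_n\in \Lebspace^\infty(\m)$ with $\sum_{i=1}^n f_i\,\underline{\de_p\varphi}(v_i)=0$ $\m$-a.e.\ on $E$; the goal is to show that $f_i=0$ $\m$-a.e.\ on $E$ for every $i$.

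The first step is to test the assumed identity against the pullback $1$-forms $[\varphi^*\de x^j]\in\varphi^*\Lebspace^p(\T^*\R^n)$, $j=1,\ldots,n$, exploiting the $\Lebspace^\infty(\m)$-linearity of the duality pairing together with the defining equation ($\ref{BD differential}$). Since $\varphi^*(\de x^j)=\de(x^j\circ\varphi)=\de\varphi^j$, one obtains
\begin{equation*}
    [\varphi^*\de x^j](\underline{\de_p\varphi}(v_i))=\varphi^*(\de x^j)(v_i)=\de\varphi^j(v_i)\qquad\forall i,j,
\end{equation*}
and hence $\sum_{i=1}^n f_i\,\de\varphi^j(v_i)=0$ $\m$-a.e.\ on $E$ for every $j$. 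Equivalently, setting $u\defeq \sum_i f_i v_i\in \Lebspace^q(\T\X)$, one has $\de\varphi^j(u)=0$ on $E$ for every $j=1,\ldots,n$.

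The second step is purely algebraic: since $\varphi$ is an EBS chart, $\{\de\varphi^1,\ldots,\de\varphi^n\}$ is a basis of $\Lebspace^p(\T^*\X)_{|E}$, so $\Lebspace^q(\T\X)_{|E}$ also has dimension $n$. Picking any basis $\{w_1,\ldots,w_n\}$ of the latter, Lemma $\ref{Algebraic Lemma}$ guarantees that $\det[\de\varphi^j(w_k)]_{jk}>0$ $\m$-a.e.\ on $E$. Writing $u=\sum_k h_k w_k$ on $E$, the relations $\de\varphi^j(u)=0$ become a homogeneous linear system whose coefficient matrix is $\m$-a.e.\ invertible, forcing $h_k=0$ on $E$ and hence $u=0$ on $E$. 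The assumed independence of $\{v_1,\ldots,v_n\}$ on $E$ then yields $f_i=0$ $\m$-a.e.\ on $E$ for every $i$, which is the desired conclusion.

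There is no serious obstacle in this strategy; the only subtle point is keeping the two pullback operations straight---the module pullback $[\varphi^*\,\cdot\,]$ versus the $1$-form pullback $\varphi^*$---so that the duality pairing carried out in $\varphi^*\Lebspace^p(\T^*\R^n)$ is correctly reduced, via ($\ref{BD differential}$), to a pairing between $\de\varphi^j$ and $v_i$ in $\X$.
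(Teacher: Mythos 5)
Your proof is correct and follows essentially the same route as the paper's: pair the vanishing combination with the pullbacks of the coordinate differentials via (\ref{BD differential}) to reduce to the linear system $\sum_i f_i\,\de\varphi^j(v_i)=0$, then invoke Lemma \ref{Algebraic Lemma} to invert it. The only (cosmetic) difference is that you expand $u=\sum_i f_iv_i$ in an auxiliary basis $\{w_k\}$ and conclude $u=0$ before using independence of the $v_i$, whereas the paper applies the determinant condition directly to the matrix $[\de\varphi^j(v_i)]$, using that $n$ independent elements of the $n$-dimensional module $\Lebspace^q(\T\X)_{|E}$ already form a basis.
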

\begin{proof}
Consider $f_1,...,f_n\in \rm{L}^\infty(\m)$ such that 
\begin{equation*}
    \sum_{i=1}^n f_i\underline{\de_p\varphi}(v_i)=0\quad\m-\text{a.e.}\;\;\text{on}\;\; E,
\end{equation*}
then set $v:=\sum_{i=1}^n f_iv_i$. Note that the maps $\Pi^j:\R^n\longrightarrow\R$ being the projection on the $j$-th component are all 1-Lipschitz with respect to the Euclidean distance and for this reason they belong to $W^{1,p}(\R^n,\text{d}_{\text{eucl}},\mu)$: following equation (\ref{BD differential}) we have that, for every $j=1,...,n$ and choosing $\omega=\de_p\Pi_j$,  
\begin{equation*}
    0 = \de\varphi^j(v)=\sum_{i=1}^n f_i\de\varphi^j(v_i)\quad\m-\text{a.e.}\;\;\text{on}\;\; E.
\end{equation*}
 Being the matrix $A=(A_{ij})_{ij}=\langle\de\varphi^j,v_i\rangle$ such that $\det A>0$ $\m$-a.e., the equations above can be rewritten as $A\underbar{f} = 0$ $\m-$a.e. on $E$ with $\underbar{f}=(f_1,...,f_n)$, meaning $\underbar{f}=0$ thanks to Lemma \ref{Algebraic Lemma}.
\end{proof}

The following result is borrowed from \cite{LPR21} (Proposition 4.5) where only the metric measure space $(\R^n,\de_{\text{eucl}},\mu)$ is considered.

\begin{proposition}
\label{Pasqualetto Lucic Rajala}
Assume that there exists a Borel set $E$ such that $\dim\Lebspace^p_\mu(\T^*\R^n)_{|E}=n$ for some $p\in (1,+\infty)$, then $\mu_{|E}<<\Leb^n$.
\end{proposition}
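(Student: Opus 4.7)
The plan is to argue by contraposition: suppose $\mu_{|E}$ has a non-trivial singular part with respect to $\Leb^n$, and aim to show $\dim \Lebspace^p_\mu(\T^*\R^n)_{|F} < n$ on some Borel $F \subseteq E$ of positive $\mu$-measure. By inner regularity and the Lebesgue decomposition of $\mu$ with respect to $\Leb^n$, one extracts a Borel $F \subseteq E$ with $\Leb^n(F) = 0$ and $\mu(F) > 0$ on which $\mu$ is purely singular.

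First I would show that the differentials of the coordinate projections $\pi_i(x) = x_i$ generate $\Lebspace^p_\mu(\T^*\R^n)$ as an $\Lebspace^\infty(\mu)$-module. Each $\pi_i$ is globally $1$-Lipschitz (hence in $W^{1,p}$, as $\mu$ is finite), so $\de_p\pi_i \in \Lebspace^p_\mu(\T^*\R^n)$; via smooth truncation/mollification and the Euclidean identity $\de_p f = \sum_i (\partial_i^{cl} f)\, \de_p\pi_i$ valid for $C^1_c$ functions, the family $\{\de_p\pi_i\}_{i=1}^n$ generates, using density of Lipschitz functions in the Sobolev space on the Euclidean base. Under the assumption $\dim \Lebspace^p_\mu(\T^*\R^n)_{|F} = n$, these $n$ generators must form a basis on $F$, and Lemma $\ref{Algebraic Lemma}$ together with $\Lebspace^p$--$\Lebspace^q$ duality of modules yields a dual basis $\partial_1, \dots, \partial_n \in \Lebspace^q_\mu(\T\R^n)_{|F}$ satisfying $\de_p\pi_i(\partial_j) = \delta_{ij}$ $\mu$-a.e.\ on $F$. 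Through the canonical injective embedding $\Lebspace^q_\mu(\T\R^n) \hookrightarrow \Lebspace^q(\mu; \R^n)$ given by $v \mapsto (\de_p\pi_i(v))_i$ (injective because the $\de_p\pi_i$ generate), each $\partial_j$ is then identified with the constant standard basis vector field $e_j$ on $F$.

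The conclusion follows by invoking \cite{DPR}. The existence of the vector fields $\partial_j$, combined with the integration-by-parts relations available on the Euclidean base $(\R^n, \de_{\text{eucl}}, \mu)$, brings the vector-valued measure $I \cdot \mu_{|F}$ on $\R^n$ into the scope of the De Philippis--Rindler rigidity theorem: it forces the polar direction of any singular part to lie in the wave cone of the associated $A$-free constraint, a proper subset of $\R^{n\times n}$ excluding invertible matrices. Since the polar of $I \cdot \mu_{|F}$ equals $I$ identically on $F$, this contradicts $\mu(F) > 0$, and one concludes $\mu_{|E} \ll \Leb^n$.

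The main obstacle I expect is precisely this last step: isolating the $A$-free (or near-$A$-free) property enjoyed by $I \cdot \mu$ from the Sobolev structure on $(\R^n, \mu)$ and verifying that its wave cone is a proper subspace of $\R^{n\times n}$. This is the PDE-theoretic input of \cite{DPR}, and the reason the statement is confined to the Euclidean base, as in \cite{LPR21}.
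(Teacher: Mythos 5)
A preliminary remark on the comparison itself: the paper does not prove this proposition at all. It is imported verbatim from \cite{LPR21} (Proposition 4.5 there), and the only accompanying text is the attribution together with the remark that the results of \cite{DPR} enter in its proof. So your proposal has to be judged against the argument of \cite{LPR21}, not against anything written in this note. Your opening steps are fine and do match the natural start of that argument: on $(\R^n,\dist_{\rm eucl},\mu)$ the coordinate differentials $\de_p\pi_1,\dots,\de_p\pi_n$ generate $\Lebspace^p_\mu(\T^*\R^n)$, the dimension hypothesis forces them to be a basis on $F$, and the embedding $v\mapsto(\de_p\pi_i(v))_i$ identifies the dual basis $\partial_1,\dots,\partial_n$ with the constant fields $e_1,\dots,e_n$ in $\Lebspace^q(\mu;\R^n)$.

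The genuine gap is the final step, and it is not a technicality: it is the entire content of the proposition. The statement of \cite{DPR} you want to invoke (the wave-cone theorem for the row-wise divergence operator, equivalently the corollary about a measure charging $n$ normal $1$-currents with $\mu$-a.e.\ spanning directions) requires vector fields $b_1,\dots,b_n\in \Lebspace^\infty(\mu;\R^n)$ satisfying two conditions: (a) $b_1(x),\dots,b_n(x)$ span $\R^n$ for $\mu$-a.e.\ $x\in F$, and (b) each $\div(b_j\mu)$ is a finite measure, i.e.\ each $b_j\mu$ is a normal current. Your candidates $\partial_j=e_j$ satisfy (a) but there is no reason whatsoever for (b): $\div(e_j\mu)=-\partial_j\mu$ in the distributional sense, which is not a measure for a generic singular $\mu$. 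Worse, the constant fields span $\R^n$ everywhere for \emph{every} measure $\mu$, so if mere spanning by elements of $\Lebspace^q(\mu;\R^n)$ sufficed, the argument would prove that every Radon measure on $\R^n$ is absolutely continuous. The sentence ``combined with the integration-by-parts relations available on the Euclidean base'' is therefore not a step but a placeholder for the missing construction: one must extract from the Sobolev structure --- in \cite{LPR21} this ultimately comes from the curves/test plans underlying the definition of $|Df|_p$, whose superposition currents are normal (their boundary is $e_{1\sharp}\pi-e_{0\sharp}\pi$) and have density controlled by the compression constant --- a family of normal currents whose directions span $\R^n$ at $\mu$-a.e.\ point of $F$. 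It is the hypothesis $\dim\Lebspace^p_\mu(\T^*\R^n)_{|F}=n$ that guarantees such currents exist, not the purely algebraic existence of the dual basis. You correctly flag this as the expected obstacle, but as written the proof does not close, which is precisely why the authors outsource the statement to \cite{LPR21} instead of proving it.
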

\begin{remark}
It is in the proof of the latter proposition that the results contained in \cite{DPR} are used.
\end{remark}
Now we are in place to apply Proposition \ref{Pasqualetto Lucic Rajala} to prove the following:
\begin{theorem}
\label{Chart abs cont}
Let $\varphi:\X\to\R^N$ be a $p$-independent weak chart over a Borel set $E$ of positive measure and with $p\geq 1$, then $\mu=\varphi_\sharp(\m_{|E})<<\Leb^N$ and $N\leq\text{dim}_H (E)$.
\end{theorem}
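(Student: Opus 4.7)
The strategy is to push the finite-dimensionality of $\Lebspace^p(\T^{*}\X)_{|E}$ forward to $\R^N$ equipped with the pushforward measure $\mu=\varphi_{\sharp}(\m_{|E})$ and then invoke Proposition \ref{Pasqualetto Lucic Rajala} to obtain $\mu\ll\Leb^N$. The Hausdorff dimension bound will then follow as a direct consequence of this absolute continuity combined with the Lipschitz regularity of $\varphi$.

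By the $p$-independence hypothesis together with Lemma 6.3 of \cite{EBS21}, the family $\{\de\varphi^{1},\ldots,\de\varphi^{N}\}$ is a basis of $\Lebspace^p(\T^{*}\X)_{|E}$, hence this module and (by Theorem 1.4.7 of \cite{Gigli14}) its dual $\Lebspace^q(\T\X)_{|E}$ both have dimension $N$; pick a dual basis $\{v_1,\ldots,v_N\}$ of $\Lebspace^q(\T\X)_{|E}$, i.e.\ one satisfying $\de\varphi^{j}(v_i)=\delta_{ij}$. Viewing $\varphi$ as a map from $(\X,\m_{|E})$ to $(\R^N,\mu)$, one has $\varphi_{\sharp}\m_{|E}=\mu$ by construction, so the pullback framework from the Preliminaries applies, and Lemma \ref{Pullback and cotangent} gives that $\{\underline{\de_p\varphi}(v_1),\ldots,\underline{\de_p\varphi}(v_N)\}$ are independent in $\varphi^{*}\Lebspace^q_{\mu}(\T\R^N)$ on $E$. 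Since the pullback of a module of dimension $k$ has dimension $k$ on the preimage, the contrapositive of this fact forces $\dim\Lebspace^q_{\mu}(\T\R^N)\geq N$ on $\varphi(E)$; and since $\Lebspace^p_{\mu}(\T^{*}\R^N)$ is generated by the $N$ coordinate differentials $\de\Pi^{1},\ldots,\de\Pi^{N}$, its dimension (and that of its dual) is always at most $N$, so equality holds $\mu$-a.e.\ on $\varphi(E)$.

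Proposition \ref{Pasqualetto Lucic Rajala} then yields $\mu\ll\Leb^N$. For the dimension bound, $\mu(\R^N)=\m(E)>0$ combined with $\mu\ll\Leb^N$ forces $\Leb^N(\varphi(E))>0$, whence $\dim_{H}\varphi(E)\geq N$; on the other hand $\varphi$ is Lipschitz, so $\dim_{H}\varphi(E)\leq\dim_{H}E$, giving $N\leq\dim_{H}E$. The step I expect to require the most care is the translation of independence in $\varphi^{*}\Lebspace^q_{\mu}(\T\R^N)$ on $E\subset\X$ into a dimension lower bound for $\Lebspace^q_{\mu}(\T\R^N)$ over $\varphi(E)\subset\R^N$: this runs the Preliminaries' pullback proposition backwards, and it also calls for mild care when $\varphi(E)$ is not Borel, a point handled by replacing $\varphi(E)$ with a Borel set of full $\mu$-measure contained in it.
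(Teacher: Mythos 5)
Your argument follows essentially the same route as the paper's proof: Lemma \ref{Pullback and cotangent} transfers independence of a dual basis to the pullback of the (co)tangent module of $(\R^N,\mu)$, the pullback proposition from the Preliminaries run in reverse shows that $\Lebspace^p_\mu(\T^*\R^N)$ is top-dimensional on $\varphi(E)$, Proposition \ref{Pasqualetto Lucic Rajala} then gives $\mu\ll\Leb^N$, and the dimension bound follows from the Lipschitz regularity of $\varphi$. The two places where you add detail are both sound: the explicit contrapositive argument for descending the dimension to $\varphi(E)$ (using that independence on a set passes to subsets of positive measure, and that $m$ independent elements cannot live in a module of dimension $k<m$), and the measurability fix for $\varphi(E)$ (the paper instead reduces to $E$ compact by inner regularity, so that $\varphi(E)$ is compact; either device works). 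Your direct derivation of $N\le\dim_H E$ from $\Leb^N(\varphi(E))>0$ is just the contrapositive of the paper's contradiction argument.

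The one genuine omission is the case $p=1$, which the statement explicitly includes ($p\ge 1$) and which your proof does not cover: Proposition \ref{Pasqualetto Lucic Rajala} is only stated for $p\in(1,+\infty)$, and for $p=1$ the conjugate exponent is $q=\infty$, so the duality bookkeeping you rely on would also need revisiting. The paper disposes of this case with a one-line reduction: since $\m$ has already been assumed finite, $|D(v\cdot\varphi)|_1\le|D(v\cdot\varphi)|_p$ holds $\m$-a.e.\ for every $v\in\mathbb{S}^{N-1}$, so a $1$-independent chart is automatically $p$-independent for some $p>1$ and the previous argument applies verbatim. You should append this reduction; with it your proof is complete.
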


\begin{proof}
For the moment assume $p\in(1,+\infty)$ and without loss of generality assume $E$ to be compact. Thanks to Lemma $\ref{Pullback and cotangent}$ we deduce that $\varphi^*\Lebspace^p_\mu(\T^*\R^N)$ has dimension $N$ over the set $E$, meaning that $\Lebspace^p_\mu(\T^*\R^N)$ has dimension $N$ over the set $\varphi(E)$. Being the latter module top dimensional, by Proposition $\ref{Pasqualetto Lucic Rajala}$ we have that $\mu<<\Leb^N$ which is the first part of the statement. The second part is immediate since if we had $N>\text{dim}_H(E)$ we would get $\mathcal{H}^N(E)=0$ and since the map $\varphi$ is Lipschitz this implies $\mathcal{H}^N(\varphi(E))=\Leb^N(\varphi(E))\leq C\cdot 0=0$, so that by absolute continuity $\mu(\varphi(E))=\m(E)=0$, which is clearly a contradiction.

For the case $p=1$ note that, since the measure $\m$ is finite, we have $|D(v\cdot\varphi)|_1\leq|D(v\cdot\varphi)|_p$ $\m$-a.e. and for every $v\in\mathbb{S}^{N-1}$, meaning that $\varphi$ is also $p$-independent and the same argument applies.
\end{proof}

\begin{remark}
\label{Bdd dimension tangent}
By virtue of the latter theorem one can see that a control on the Hausdorff dimension $\textit{l}$ of a subset $E$ of a metric measure space grants that the dimension of $\Lebspace^p(\T^{*}\X)_{|E}$ is bounded by $\textit{l}$, hence the cotangent module is finite dimensional there. Moreover the proof presented here simplifies the one in \cite{GP162} since there the authors needed to build independent vector fields in $\Lebspace^2(\T\X)$ with $\Lebspace^2(\m)$-integrable divergence  and push them to $\R^n$ keeping them independent and regular: to do so they had to use additional properties of the map $\Prphi_\varphi$ and the bi-Lipschitz regularity of their chart $\varphi$ was essential. Here instead we mainly exploit the properties of $\R^n$. 
\end{remark}

\printbibliography
 
\end{document}